%\date{Rev. 6/V/13 JM}
\date{}
\title{Near-optimal separators in string graphs}

\newcommand{\cmt}[1]{\ifhmode\newline\fi{\sf *** \ \ #1 \\}}

\author{
{\sc Ji\v{r}\'{\i} Matou\v{s}ek}\thanks{Supported
by the  ERC Advanced Grant No.~267165
and by GRADR Eurogiga GIG/11/E023.}
% is gratefully acknowledged.} 
\\
   {\footnotesize Department of Applied Mathematics and}\\[-1.5mm]
   {\footnotesize Institute of Theoretical Computer Science (ITI)}\\[-1.5mm]
   {\footnotesize  Charles University, Malostransk\'{e} n\'{a}m. 25}\\[-1.5mm]
{\footnotesize  118~00~~Praha~1,
   Czech Republic, and}\\%[-1.5mm]
{\footnotesize    Institute of  Theoretical Computer Science}\\[-1.5mm]
{\footnotesize    ETH Zurich,
      8092 Zurich, Switzerland}
%\\[-1.5mm]   {\footnotesize e-mail: {\tt matousek@kam.mff.cuni.cz}}
}

\documentclass[11pt]{article}

\usepackage{a4wide}
\usepackage{latexsym}
\usepackage{amssymb,amsmath,amsthm}
\usepackage{graphicx}
\usepackage{url}

\newtheorem{theorem}{Theorem}%[section]

\newtheorem{lemma}[theorem]{Lemma}

\newcommand{\heading}[1]{\vspace{1ex}\par\noindent{\bf #1}}

\makeatletter
\newcommand{\ProofEndBox}{{\ifhmode\unskip\nobreak\hfil\penalty50 \else
          \leavevmode\fi\quad\vadjust{}\nobreak\hfill$\Box$
            \finalhyphendemerits=0 \par}}
\makeatother

\newcommand{\R}{{\mathbb{R}}}

\newcommand\PP{\mathcal{P}}

\def\:{\colon}

\DeclareMathOperator{\vcong}{vcong}

\DeclareMathOperator{\conge}{cong}
\DeclareMathOperator{\crr}{cr}
\DeclareMathOperator{\pcr}{pcr}

\long\def\onefigure#1#2{%  #1 picture,  #2  caption
\begin{figure*}[tbp]
\begin{center}
#1
\end{center}
\caption{#2}
\end{figure*}
}

\def\immediateFigure#1{%
\smallskip\begin{center}#1\end{center}\smallskip }

\newcommand{\labfig}[2]  % labeled figure
{\onefigure{\mbox{\includegraphics{Figures/#1}}}{\label{f:#1} #2} }

\newcommand{\labfigw}[3]  % labeled figure with prescribed width
{\onefigure{\mbox{\includegraphics[width=#2]{Figures/#1}}}{\label{f:#1} #3}}

\newcommand{\immfig}[1]  % immediate figure
{\immediateFigure{\mbox{\includegraphics{Figures/#1}}}}

\newcommand{\immfigw}[2] % immediate figure with prescribed width
{\immediateFigure{\mbox{\includegraphics[width=#2]{Figures/#1}}}}

\begin{document}

\maketitle

\begin{abstract} Let $G$ be a string graph (an intersection graph of continuous
arcs in the plane) with $m$ edges. 
Fox and Pach proved that $G$ has a separator consisting
of $O(m^{3/4}\sqrt{\log m})$ vertices, and they conjectured that
the bound of $O(\sqrt m)$ actually holds. We obtain 
separators with $O(\sqrt m \,\log m)$ vertices.
\end{abstract}

%\heading{AMS Classification:} 05C62, 05C10
%\bigskip

Let $G=(V,E)$ be a graph with $n$ vertices. A \emph{separator} in $G$
is a set $S\subseteq V$ of vertices such that  there is a partition
$V=V_1\cup V_2\cup S$ with $|V_1|,|V_2|\le \frac23n$ and no edges
connecting $V_1$ to $V_2$.
%no component of $G\setminus S$ has more than $\frac 23n$ vertices.
 The graph $G$
is a \emph{string graph} if it is an intersection graph of curves in the plane,
i.e., if there is a system $(\gamma_v:v\in V)$ of curves (continuous arcs)
such that $\gamma_u\cap\gamma_v\ne\emptyset$ iff $\{u,v\}\in E(G)$ or $u=v$.

Fox and Pach \cite{fox2010separator} proved that 
every string graph has a separator with
$O(m^{3/4}\sqrt{\log m})$ vertices, where $m$ is the number of 
edges of $G$. 

We should mention that they
actually proved the result for the weighted case, where
each vertex $v\in $V has a positive real weight,
and the size of the components of $G\setminus S$ is measured by
the sum of vertex weights (while the size of $S$ is still measured
as the number of vertices). Our result can also be extended to
the weighted case, either by deriving it from the unweighted case
along the lines of \cite{fox2010separator}, or by using appropriate
vertex-weighted versions (available in the cited sources)
of the tools used in the proof. However, for simplicity, we stick
to the unweighted case in this note.

Pach and Fox conjectured that string graphs actually have
separators of size $O(\sqrt m\,)$  (which, if true,
would be asymptotically optimal in the worst case).
Earlier, in \cite{FoPa-septur}, they proved some special cases 
of this conjecture: most notably, if every two curves $\gamma_u,\gamma_v$
in the string representation intersect in at most $k$ points,
where $k$ is a constant.
As they kindly informed me in February 2013, they also have an (unpublished)
proof of existence of separators of size $O(\sqrt n\,)$ in string
graphs with maximum degree bounded by a constant.
Here we obtain the following result.

\begin{theorem}\label{t:} Every string graph $G$ with $m\ge 2$ edges has
a separator with $O(\sqrt m\,\log m)$ vertices.
\end{theorem}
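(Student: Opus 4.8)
\noindent The plan is to turn the problem into one of cutting the curves of a string representation with a short closed curve, and then to control that by linear‑programming duality for balanced separators.

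\smallskip
\noindent\emph{A topological reformulation.} Fix a string representation $(\gamma_v:v\in V)$ in general position (finitely many crossings, all transversal, no triple points; the curves polygonal, say). Let $\sigma$ be any simple closed curve transversal to all $\gamma_v$, let $V_1$ be the set of $v$ with $\gamma_v$ contained in the interior of $\sigma$, let $V_2$ be defined similarly with the exterior, and let $S=\{v:\gamma_v\cap\sigma\neq\emptyset\}$. Then $V=V_1\cup V_2\cup S$, and there is no edge between $V_1$ and $V_2$: a point of $\gamma_u\cap\gamma_v$ with $u\in V_1$ and $v\in V_2$ would lie simultaneously inside and outside $\sigma$. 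Moreover $|S|$ is at most the number of crossings of $\sigma$ with $\bigcup_v\gamma_v$. So it suffices to exhibit a \emph{balanced} $\sigma$, meaning $|V_1|,|V_2|\le\frac23 n$, that crosses at most $O(\sqrt m\,\log m)$ of the curves. By treating connected components separately and distributing isolated vertices (a routine argument) we may also assume $G$ is connected, so that $n\le m+1$ and $\log n=O(\log m)$.

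\smallskip
\noindent\emph{Strategy.} The intended route has two stages. First, produce a \emph{fractional} balanced cut of total weight $O(\sqrt m)$: a fractional assignment of weights to the curves (dually, a spreading metric) that ``separates'' them at cost $O(\sqrt m)$, i.e.\ solves the natural linear‑programming relaxation. This should be read off from the arrangement $\mathcal{A}$ of the curves---the planar graph whose vertices are the endpoints and crossings, whose edges are the sub‑arcs (each lying on a unique curve), and in which each $\gamma_v$ is a simple path $P_v$. In a planar graph, a fractional balanced cut separating $n$ connected clusters has small weight, and the heart of the matter is to prove that this weight is controlled by the number $m$ of intersecting pairs of clusters. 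Second, round the fractional cut to an honest closed curve $\sigma$ by the Leighton--Rao rounding of balanced separators (equivalently its multicommodity‑flow dual: a graph with no balanced separator of size $s$ carries a concurrent flow of one unit between every pair of vertices with $O(n^2\log n/s)$ flow through each vertex, which, pushed along the curves into the planar graph $\mathcal{A}$ with congestion $O(n^2\log n/s)$ per arc, would contradict the first stage once $s=C\sqrt m\,\log m$ for a suitable constant $C$). This loses a factor $O(\log n)=O(\log m)$, yielding the claimed bound.

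\smallskip
\noindent\emph{The main obstacle.} The difficulty is concentrated in the first stage, and it is genuine: the naive estimate---a planar separator of $\mathcal{A}$, or charging crossings of realized flow paths to vertices of $\mathcal{A}$---costs $\Theta(\sqrt{|\mathcal{A}|})$, and $|\mathcal{A}|$ (essentially the number of crossings) can be exponential in $m$, since string graphs are known to require exponentially many crossings in every representation. One must instead amortize crossings to the at most $m$ intersecting \emph{pairs} of curves. I would attempt this by first passing to a representation with the minimum number of crossings and using that in such a representation every lens bounded by two sub‑arcs of a pair $\gamma_a,\gamma_b$ is occupied by (a piece of) another curve, so that the crossing multiplicity of a pair is paid for by the remaining curves; the fractional/LP setting is what makes this amortization possible, and---together with the rounding step---it is the source of the extra $\log m$ beyond the conjectured $O(\sqrt m)$.
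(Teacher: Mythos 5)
There is a genuine gap, and you have located it yourself: your ``first stage'' --- producing a fractional balanced cut of cost $O(\sqrt m)$ from the arrangement $\mathcal{A}$ --- is exactly the heart of the theorem, and your proposal only gestures at it (``I would attempt this by passing to a minimum-crossing representation and using lenses\dots''). No argument is given that the lens/amortization idea actually yields the bound, and it is far from clear that it does; in particular, working inside the arrangement graph $\mathcal{A}$ keeps you exposed to the exponential number of crossings, and charging a per-arc congestion bound (as in your rounding step) does not obviously translate into anything controlled by $m$. So as written the proof does not go through: the second stage (flow--cut duality for vertex separators with an $O(\log n)$ loss, essentially Feige--Hajiaghayi--Lee) is fine and matches the paper's Lemma~\ref{l:2}, but the quantitative input it needs is missing.

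The paper's way around your ``main obstacle'' is different and is worth absorbing, because it dissolves the exponential-crossing issue entirely. One bounds the \emph{vertex congestion} of $G$ from below (Lemma~\ref{l:1}): take an all-pairs unit-demand multicommodity flow attaining $\vcong(G)$, sample for each pair $\{u,v\}$ a flow path at random, and draw the edge $\{u,v\}$ of $K_V$ by following the strings $\gamma_{v_0},\gamma_{v_1},\ldots$ along the sampled path. Two drawn edges can cross only if their paths contain vertices $w,w'$ with $\gamma_w\cap\gamma_{w'}\ne\emptyset$, i.e.\ $\{w,w'\}\in E(G)$ or $w=w'$; hence the expected number of \emph{pairs} of crossing edges is at most $4(m+n)\vcong(G)^2$, while $\pcr(K_V)=\Omega(n^4)$. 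This gives $\vcong(G)=\Omega(n^2/\sqrt m)$, and plugging it into the approximate duality you cite yields a separator of size $O((n^2\log n)/\vcong^*(G))=O(\sqrt m\,\log m)$. The crucial point is that the pair-crossing number counts crossing \emph{pairs} of edges of $K_V$, charged to edges and vertices of $G$ itself --- not to vertices or arcs of the arrangement --- so the (possibly exponential) crossing multiplicity of the representation never enters; no minimum-crossing representation, lens argument, or explicit cutting curve $\sigma$ is needed. If you want to salvage your outline, replace the first stage by this congestion lower bound; the topological reformulation via a closed curve is then superfluous, since the sparse-vertex-cut machinery produces the separator purely graph-theoretically.
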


Clearly, we may assume $G$ connected, and then 
the theorem immediately follows from Lemmas~\ref{l:1} and~\ref{l:2} below.
Lemma~\ref{l:1} combines the considerations of \cite{fox2010separator}
with those of \cite{kolman2004crossing} and adjusts them for
vertex congestion instead of edge congestion. 
Lemma~\ref{l:2} replaces an approximate duality between
sparsity of edge cuts and edge congestion due to Leighton and
Rao \cite{LeightonRao} used in \cite{kolman2004crossing} with an
approximate duality between sparsity of vertex cuts and
vertex congestion, which is an immediate consequence
of the results of Feige et al.~\cite{feige2008improved}.

Fox and Pach \cite{foxpach-stringappl} obtained several
interesting applications of Theorem~\ref{t:}. Here we mention
yet another consequence.

\heading{Crossing number versus pair-crossing number. } The \emph{crossing
number} $\crr(G)$ of a graph $G$ is the minimum possible number of
edge crossings in a drawing of $G$ in the plane, while the
\emph{pair-crossing number} $\pcr(G)$ is the minimum possible
number of pairs of edges that cross in a drawing of~$G$.

One of the most tantalizing questions in the theory of graph drawing
is whether $\crr(G)=\pcr(G)$ for all graphs $G$
\cite{PachToth-cr}, and in the absence of
a solution, researchers have been trying to bound $\crr(G)$ from above
by a function of $\pcr(G)$. The strongest result so far by 
T\'oth \cite{toth430better} was $\crr(G)=O( p^{7/4}(\log p)^{3/2})$,
where $p=\pcr(G)$. It is based on the Fox--Pach separator theorem
for string graphs discussed above, and by replacing their bound
by Theorem~\ref{t:} in T\'oth's proof, one obtains the improved 
estimate $\crr(G)=O(p^{3/2}\log^2 p)$.

%\heading{Two consequences in extremal theory of string graphs. }
%As pointed out in \cite[Sec.~8]{fox2010separator},  TADY
%
%\cite{PachFoxKkfree}

\heading{Vertex congestion in string graphs. }
Let $\PP$ denote the set of all paths in $G$, and for each
pair $\{u,v\}\in {V\choose 2}$ of vertices, let $\PP_{uv}
\subseteq \PP$ be all paths from $u$ to $v$. An \emph{all-pair
unit-demand multicommodity flow} in $G$ is a mapping
$\varphi\:\PP\to [0,1]$ such that $\sum_{P\in\PP_{uv}}\varphi(P)=1$
for every $\{u,v\}\in {V\choose 2}$. The \emph{congestion} $\conge(w)$ of
a vertex $w\in V$ under $\varphi$ is the total flow through $w$
where, for conformity with 
\cite{feige2008improved}, we count only half of the flow 
through a path $P$ if $w$ is one of the endpoints of $P$.
That is,
\[
\conge(w)=\sum_{P\in\PP: w{\rm\ internal\ vertex\ of\ }P} \varphi(P)+
\frac 12 \sum_{P\in\PP: w{\rm~endpoint~of~}P} \varphi(P).
\]
% i.e., $\sum_{P\in\PP:w\in P}\varphi(P)$. 
We define $\vcong(G):=
\min_\varphi \max_{w\in V} \conge(w)$, where the minimum is over
all all-pair unit-demand multicommodity flows.\footnote{It is well known, 
and easy  to check by a compactness argument, that $\min$ is attained.}

\begin{lemma}\label{l:1} 
If $G$ is a connected string graph, then
$\vcong(G)\ge c n^2/\sqrt m$ (for a suitable constant $c>0$).
\end{lemma}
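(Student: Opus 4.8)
The plan is to pass to the complete graph: I would build, from an efficient multicommodity flow in $G$, an efficient drawing of $K_n$, and then compare it with the lower bound $\pcr(K_n)=\Omega(n^4)$, a standard variant of the Crossing Lemma for the pair‑crossing number (cf.~\cite{kolman2004crossing}). Concretely, fix any all‑pair unit‑demand flow $\varphi$ (which may be assumed finitely supported) and set $t:=\max_{w\in V}\conge(w)$; it is enough to prove $t=\Omega(n^2/\sqrt m)$, since the same bound then holds for the congestion‑minimizing flow. The reason to work with $\pcr$ rather than $\crr$ --- and the point of contact with \cite{kolman2004crossing} --- is that the estimate will only care \emph{whether} two curves of the string representation cross, not how many times, the latter being unbounded in general.

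For the construction, fix a representation $(\gamma_v:v\in V)$ of $G$ in general position and choose on each $\gamma_v$ a point $p_v$ avoiding all curve intersections; the $p_v$ are distinct and serve as the vertices of the drawing. For a path $P=(w_0,w_1,\dots,w_k)$ in $G$ the union $\gamma_{w_0}\cup\dots\cup\gamma_{w_k}$ is connected (consecutive curves meet), so there is a simple arc $\delta_P$ from $p_{w_0}$ to $p_{w_k}$ confined to an arbitrarily small neighbourhood of this union, following $\gamma_{w_0},\gamma_{w_1},\dots$ in turn and avoiding every $p_x$ with $x\notin\{w_0,w_k\}$. Now for each pair $\{u,v\}$ draw the edge $uv$ of $K_n$ as $\delta_{P_{uv}}$, where $P_{uv}\in\PP_{uv}$ is picked at random with $\Pr[P_{uv}=P]=\varphi(P)$, independently over the pairs.

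Next I would bound the expected number of crossing pairs of edges. Because $\delta_{P_{uv}}$ stays in a tiny neighbourhood of $\bigcup_{w\in P_{uv}}\gamma_w$, a crossing with $\delta_{P_{u'v'}}$ can only occur near a point where $\gamma_a$ meets $\gamma_b$ for some $a\in P_{uv}$ and $b\in P_{u'v'}$, and then $a=b$ or $\{a,b\}\in E$. So a pair $\{u,v\}\ne\{u',v'\}$ can contribute a crossing only if $P_{uv}$ and $P_{u'v'}$ share a vertex or some vertex of $P_{uv}$ is $G$‑adjacent to some vertex of $P_{u'v'}$. Estimating each event by a union bound over the common vertex, resp.\ over the edge of $G$, using independence, interchanging summations, and invoking $\sum_{P\ni w}\varphi(P)\le 2\conge(w)\le 2t$ for every $w$ (the factor $2$ accounting for the $\tfrac12$‑weighting of endpoint flow in $\conge$), the expected number of crossing pairs comes out at most
\[
\sum_{w\in V}\Bigl(\sum_{P\ni w}\varphi(P)\Bigr)^{2}
+2\!\!\sum_{\{a,b\}\in E}\!\Bigl(\sum_{P\ni a}\varphi(P)\Bigr)\Bigl(\sum_{P\ni b}\varphi(P)\Bigr)\ \le\ 4nt^{2}+8mt^{2}.
\]
Hence $\pcr(K_n)\le 4nt^2+8mt^2=O(mt^2)$, since $G$ is connected and so $m\ge n-1$; combined with $\pcr(K_n)=\Omega(n^4)$ (vacuously fine for $n$ below an absolute constant, as $\vcong(G)\ge\tfrac12$ always), this yields $t=\Omega(n^2/\sqrt m)$.

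The step I expect to be the real work is the second one --- laying out the arcs $\delta_P$ and checking rigorously that two of them can meet only inside small neighbourhoods of the intersection points of the curves $\gamma_v$, so that self‑intersections can be suppressed, no $\delta_P$ runs through a foreign vertex $p_x$, and the crossing structure is controlled purely by $G$ and $\varphi$. After that the estimate above is routine; the one genuinely essential idea is counting crossings pairwise, which renders the unbounded multiplicity of intersections in a string representation harmless.
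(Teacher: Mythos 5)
Your proposal is correct and follows essentially the same route as the paper: draw $K_n$ by choosing, independently for each pair, a random path according to the flow and routing the edge along the corresponding strings, bound the expected number of crossing pairs by $O\bigl((m+n)\,t^2\bigr)=O(m t^2)$ via the congestion (one term per shared vertex, one per edge of $G$), and compare with $\pcr(K_n)=\Omega(n^4)$. Your extra care in perturbing the arcs into small neighbourhoods so that the result is a legitimate drawing is just a more explicit treatment of a step the paper glosses over, not a different argument.
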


\begin{proof} Let $\varphi$ be a flow for which $\vcong(G)$ is attained,
and let $(\gamma_v:v\in V)$ be a string representation of $G$. We construct
a drawing of $K_V$, the complete graph on the vertex set $V$, as follows.

We draw each vertex $v\in V$ as a
point $p_v\in\gamma_v$, in such a way that
all the $p_v$ are distinct. 

For every edge
$\{u,v\}\in {V\choose 2}$ of the complete graph, we pick a path 
$P_{uv}$ from $\PP_{uv}$ at random, where each $P\in \PP_{uv}$ is
chosen with probability $\varphi(P)$, the choices being independent
for different $\{u,v\}$. 
Let us enumerate the vertices along $P_{uv}$
as $v_0=u,v_1,v_2,\ldots,v_k=v$. Then we draw the edge $\{u,v\}$
of $K_V$ in the following manner: We start at $p_u$, follow
$\gamma_{u}$ until some (arbitrarily chosen) intersection with 
$\gamma_{v_1}$, then we follow $\gamma_{v_1}$ until some intersection
with $\gamma_{v_2}$, etc., until we reach $\gamma_v$ and $p_v$ on it.

Let us estimate the expected number of pairs $\{\{u,v\},\{u',v'\}\}$
of edges of $K_V$
that intersect in this drawing. 

The drawings of $\{u,v\}$ and $\{u',v'\}$ may intersect only if there are
vertices $w\in P_{uv}$ and $w'\in P_{u'v'}$ such that
$\gamma_w\cap \gamma_{w'}\ne \emptyset$, i.e., $\{w,w'\}\in E(G)$ or $w=w'$.
For every choice of $\{w,w'\}\in E(G)$ or $w=w'\in V$, 
the expected number of pairs $\{P_{uv},P_{u'v'}\}$ with $w\in P_{uv}$ 
and $w'\in P_{u'v'}$
is easily seen to be bounded above by $4\vcong(G)^2$
(using linearity of expectation and independence).
Thus, the total expected number of intersecting pairs of edges of $K_V$
is at most $4(m+n)\vcong(G)^2\le 4(2m+1)\vcong(G)^2$. 

At the same time, it is well known that $\pcr(K_V)=\Omega(n^4)$,
i.e., any drawing of $K_V$ has $\Omega(n^4)$ intersecting pairs of edges
(see, e.g., \cite[Thm.~3]{PachToth-cr}).
So $m \vcong(G)^2=\Omega(n^4)$ and 
the lemma follows.
\end{proof}

\heading{Vertex congestion and separators. } Let us define 
$\vcong^*(G):=\min\{\vcong(H):
H\mbox{ is an induced subgraph of }G\mbox{ on at least $\frac23n$ vertices}\}$.

\begin{lemma}\label{l:2} 
Every graph $G$ on $n$ vertices has a vertex separator
with $O((n^2 \log n)/\vcong^*(G))$ vertices.
\end{lemma}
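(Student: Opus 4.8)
The plan is to exploit the approximate LP-duality between vertex congestion of an all-pair multicommodity flow and the sparsity of vertex cuts, as developed by Feige, Hajiaghayi and Lee~\cite{feige2008improved}. Recall that the \emph{vertex sparsity} of a set $S\subseteq V$ is (up to normalization) the ratio $|S|$ divided by the product of the number of vertices on the two sides of $V\setminus S$; equivalently, $S$ is a sparse vertex cut if $|S|$ is small compared with the number of vertex pairs it separates. The key fact I would invoke from~\cite{feige2008improved} is that a graph $G$ on $n$ vertices admits a vertex separator of size $O((n^2\log n)/\vcong(G))$ — more precisely, their rounding of a vertex-capacitated LP relaxation shows that if every all-pair unit-demand flow must route congestion at least $\kappa$ through some vertex, then there is a vertex cut $S$ with $|S|=O((n^2\log n)/\kappa)$ whose removal leaves two sides each of size at most, say, $\frac34 n$ (or some fixed fraction bounded away from $1$). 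The logarithmic loss is exactly the integrality gap / approximation factor in the vertex-sparsest-cut result, analogous to the Leighton–Rao bound for edges but proved via the Feige et al.\ machinery.

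The main step is therefore to convert this ``$\vcong(G)$'' bound into a ``$\vcong^*(G)$'' bound, i.e.\ to handle the possibility that $G$ itself has small vertex congestion only because of a few ``expensive'' vertices, while every large subgraph is still highly congested. I would proceed by iterated cutting: repeatedly pull out a sparse vertex cut. Start with $G_0=G$. Having built a graph $G_i$ on $n_i$ vertices with $n_i\ge \frac23 n$, apply the Feige et al.\ bound to $G_i$ to get a vertex cut $S_i$ with $|S_i|=O((n_i^2\log n_i)/\vcong(G_i))$ separating $G_i$ into pieces of size $\le\frac34 n_i$; discard the smaller piece together with $S_i$ and recurse on the larger piece. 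Since $G_i$ is a subgraph of $G$ on at least $\frac23 n$ vertices, $\vcong(G_i)\ge\vcong^*(G)$ by definition, so $|S_i|=O((n^2\log n)/\vcong^*(G))$ at every step. Stop as soon as the remaining piece drops below $\frac23 n$ vertices; at that point the union of all removed cuts $S_i$ together with the final discarded remainder forms the separator.

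It remains to bound the number of iterations and the total size removed. Because each step removes at least a constant fraction (here $\ge\frac14$) of the \emph{current} vertex count, and we stop once we are below $\frac23 n$, only $O(1)$ iterations occur before the surviving piece first fails to have $\ge\frac23 n$ vertices — in fact a single application already either produces a balanced-enough cut or leaves a piece between $\frac23 n$ and $\frac34 n$, so at most a couple of rounds suffice. Hence the total number of vertices placed into the separator is $O(1)\cdot O((n^2\log n)/\vcong^*(G)) = O((n^2\log n)/\vcong^*(G))$, and the two remaining parts each have at most $\frac23 n$ vertices by construction, as required.

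The step I expect to be the main obstacle is making precise the exact form of the vertex-cut rounding theorem extracted from~\cite{feige2008improved}: their results are phrased in terms of vertex-capacitated sparsest cut and a relaxation whose value equals $\vcong(G)$ up to the factors shown, and one must check that the rounding yields a cut with a genuinely balanced (constant-fraction) partition rather than merely a sparse one — the standard region-growing/ball-growing argument does give this, but the bookkeeping (and the precise constant in the $\frac23$ vs.\ $\frac34$ trade-off, which governs how many recursion steps are needed) is the delicate part. Everything else — the definition of $\vcong^*$, the monotonicity $\vcong(G_i)\ge\vcong^*(G)$, and the geometric decrease of the vertex count — is routine.
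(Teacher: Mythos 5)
There is a genuine gap, and it sits exactly where you yourself flag the ``delicate part''. The key fact you invoke --- that the Feige--Hajiaghayi--Lee rounding yields, in one shot, a vertex cut of size $O((n^2\log n)/\vcong(G))$ whose two sides both have at most a constant fraction (say $\frac34$) of the vertices --- is false, not merely delicate. Consider a clique on $\frac45 n$ vertices with a path on $\frac15 n$ vertices attached at a single clique vertex: all flow between the path and the clique must pass through the attachment vertex, so $\vcong(G)=\Theta(n^2)$ and your claimed bound would give a balanced cut of size $O(\log n)$; but any partition $(A,B,S)$ with no $A$--$B$ edges and both sides of size at most $\frac34 n$ must put $\Omega(n)$ clique vertices into $S$. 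This is precisely why the lemma is stated with $\vcong^*(G)$ rather than $\vcong(G)$: the stronger statement with $\vcong(G)$ that your argument rests on simply does not hold. What \cite{feige2008improved} (Bourgain embedding plus their Lemma~3.7) actually delivers is a \emph{sparse} cut, i.e.\ one with $|S|/(|A\cup S|\cdot|B\cup S|)=O((\log n)/\vcong(G))$, and a sparsest cut can be arbitrarily unbalanced, chipping off only a few vertices. Consequently your recursion need not terminate in $O(1)$ rounds --- it can take $\Omega(n)$ rounds --- and the bound ``(number of rounds)$\times$(per-round cut size)'' collapses.

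The missing ingredient is the charging argument behind the standard iteration (step 4 of the paper's proof, following Sec.~6 of \cite{feige2008improved}): repeatedly take a sparsest vertex cut in any current piece that still has more than $\frac23 n$ vertices (such a piece is a subgraph of $G$ on at least $\frac23 n$ vertices, so its congestion is at least $\vcong^*(G)$ and its sparsest cut has sparsity $O((\log n)/\vcong^*(G))$ --- this is where your correct observation $\vcong(G_i)\ge\vcong^*(G)$ is used). One then bounds the \emph{total} separator size not per round but globally: $\sum_i|S_i|\le O((\log n)/\vcong^*(G))\cdot\sum_i|A_i\cup S_i|\cdot|B_i\cup S_i|$, and the latter sum is $O(n^2)$ because each pair of vertices is separated (charged) only a bounded number of times over the whole process; at the end the pieces, all of size at most $\frac23 n$, are grouped greedily into the two sides. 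Also, a small bookkeeping slip: the final remaining piece should become (part of) one side of the partition, not part of the separator as your last sentence suggests. Your black-boxing of the LP duality, the Bourgain embedding, and the line-embedding rounding is fine in spirit, but the black box must output sparse cuts plus the charging argument, not balanced cuts.
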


\begin{proof} The proof goes in the following steps, all of them
contained in \cite{feige2008improved} (also see
\cite{biswal2010eigenvalue}, especially Sec.~5.2 there, for a similar
use of \cite{feige2008improved}).
\begin{enumerate} 
\item Let $s\:V\to [0,\infty)$ be an assignment of real weights to
the vertices of $G$, let the weight of an edge $e=\{u,v\}\in E(G)$
be $(s(u)+s(v))/2$, and let $d_s$ be the shortest-path pseudometric
in $G$ with these edge weights. By the duality of linear
programming, it is easy to derive (see \cite[Sec.~4]{feige2008improved})
\[
\textstyle
\frac1{\vcong(G)}=
\min \bigl\{\sum_{v\in V}s(v):\sum_{\{u,v\}\in{V\choose 2}} d_s(u,v)=1\bigr\}.
\]
\item Let $s^*$ be a vertex weighting attaining the minimum in the last formula.
By using a famous result of Bourgain suitably, see \cite[Theorem~3.1]{feige2008improved}, we get that there exists a function $f\:V\to \R$ that is
1-Lipschitz w.r.t. $s^*$, i.e., $|f(u)-f(v)|\le d_{s^*}(u,v)$ for all
$u,v\in V$, and such that $\sum_{\{u,v\}\in{V\choose 2}} |f(u)-f(v)|=
\Omega((\sum_{\{u,v\}\in{V\choose 2}}d_{s^*}(u,v))/\log n)=
\Omega(1/\log n)$.
\item Let $(A,B,S)$ be a partition of the vertex set of a graph $G$
into three disjoint subsets with $A\ne\emptyset\ne B$ and
no edges between $A$ and $B$. Let the \emph{sparsity} of $(A,B,S)$ be
$%\spars(A,B,S):=
\frac{|S|}{|A\cup S|\cdot|B\cup S|}$. By
\cite[Lemma~3.7]{feige2008improved}, given a function $f$ as above for $G$,
there exists a partition $(A,B,S)$ of the vertex set with sparsity
$O((\sum_{v\in V} s^*(v))\log n)=O((\log n)/\vcong(G))$.
\item 
A standard procedure, starting with $G$ and repeatedly finding
a sparse partition until the size of all components drops below
$\frac 23 n$ (see, e.g., \cite[Sec.~6]{feige2008improved}), then
finds a separator of size $O((n^2\log n)/\vcong^*(G))$ in $G$ as claimed.
\end{enumerate}
\end{proof}

\heading{Remark. } Although Lemma~\ref{l:2} is tight for arbitrary graphs,
a possible way towards proving the Fox--Pach conjecture, separators
for string graphs of size $O(\sqrt m\,)$, would be removing the $\log n$
factor in Lemma~\ref{l:2} under the assumption that $G$ is a string graph.
More concretely, the improvement might be achievable in item~2 of the
proof above: indeed, if $G$ is a planar graph or, more generally,
belongs to a minor-closed class of graphs with a forbidden minor,
then, in the setting of item~2, the 1-Lipschitz $f$ 
can even be made to satisfy $\sum_{\{u,v\}\in{V\choose 2}} |f(u)-f(v)|=
\Omega(1)$ \cite{Rabinovich} (also see \cite[Thm.~3.2]{feige2008improved}).
Thus, a similar improvement for string graphs is perhaps not out of reach.

\heading{Acknowledgment. } I would like to thank Jacob Fox
and J\'anos Pach, as well as an anonymous referee,
 for very useful comments.

\bibliographystyle{alpha}
\bibliography{../../bib/cg.bib}

\end{document}